\documentclass[12pt]{amsart}

\usepackage{amsmath}
\usepackage{amsthm}
\usepackage{amsfonts}
\usepackage{comment}
\usepackage{amssymb,amsrefs}
\usepackage{hyperref}
\usepackage{mathtools}
\usepackage{bm}
\usepackage[margin=1in]{geometry}

\usepackage{mathrsfs}
\usepackage{enumerate}

\usepackage{todonotes, atveryend}

\newtheorem{theorem}{Theorem}

\newtheorem{lemma}[theorem]{Lemma}

\theoremstyle{definition}

\newtheorem*{theorem*}{Theorem}
\newtheorem*{proposition*}{Proposition}
\newtheorem*{lemma*}{Lemma}

\theoremstyle{remark}
\newtheorem*{remark}{Remark}

\newcommand{\FF}{\mathbb F}
\newcommand{\QQ}{\mathbb Q}

\newcommand{\ZZ}{\mathbb Z}

\makeatletter
\newcommand*{\defeq}{\mathrel{\rlap{%
                     \raisebox{0.3ex}{$\m@th\cdot$}}%
                     \raisebox{-0.3ex}{$\m@th\cdot$}}%
                     =}
\makeatother

\makeatletter
\def\@tempa#1{\@xp\@tempb\meaning#1\@nil#1}
\def\@tempb#1>#2#3 #4\@nil#5{%
  \@xp\ifx\csname#3\endcsname\mathaccent
    \@tempc#4?"7777\@nil#5%
  \else
    \PackageWarningNoLine{amsmath}{%
    Unable to redefine math accent \string#5}%
  \fi
}
\def\@tempc#1"#2#3#4#5#6\@nil#7{%
  \chardef\@tempd="#3\relax\set@mathaccent\@tempd{#7}{#2}{#4#5}}

\@tempa\widetilde
\makeatother

\begin{document}

\title[Permutations that destroy arithmetic progressions]{Permutations that destroy arithmetic progressions \\ in Elementary $p$-Groups}

\date{\today}

\author[Noam D. Elkies]{Noam D. Elkies}

\author[Ashvin A. Swaminathan]{Ashvin A. Swaminathan}


\begin{abstract}
Given an abelian group $G$, it is natural to ask whether there
exists a permutation $\pi$ of $G$ that ``destroys'' all nontrivial
3-term arithmetic progressions (APs), in the sense that $\pi(b)
- \pi(a) \neq \pi(c) - \pi(b)$ for every ordered triple $(a,b,c) \in
G^3$ satisfying $b-a = c-b \neq 0$. This question was resolved
for infinite groups $G$ by Hegarty, who showed that there exists
an AP-destroying permutation of $G$ if and only if $G/\Omega_2(G)$
has the same cardinality as $G$, where $\Omega_2(G)$ denotes the
subgroup of all elements in $G$ whose order divides $2$. In the
case when $G$ is finite, however, only partial results have been
obtained thus far. Hegarty has conjectured that an AP-destroying
permutation of $G$ exists if $G = \ZZ/n\ZZ$ for all $n \neq 2,3,5,7$,
and together with Martinsson, he has proven the conjecture for
all $n > 1.4 \times 10^{14}$. In this paper, we show that if $p$
is a prime and $k$ is a positive integer, then there is an AP-destroying
permutation of the elementary $p$-group $(\ZZ/p\ZZ)^k$ if and only
if $p$ is odd and $(p,k) \not\in \{(3,1),(5,1), (7,1)\}$.
\end{abstract}
\vspace*{-0.1in}
\maketitle

\section{Introduction}\label{sec:intro}

Let $G$ be an abelian group, and let $\pi : G \to G$\/ be any permutation.
Following the terminology of Hegarty and Martinsson (see~\cite{hegand}),
we say that $\pi$ \emph{destroys} all nonconstant arithmetic progressions
(henceforth, APs) in $G$ if there is no ordered triple $(a,b,c)
\in G^3$ such that $b-a = c-b \neq 0$ and $\pi(b) - \pi(a) = \pi(c)
- \pi(b)$ (i.e., $(a,b,c)$ and $(\pi(a),\pi(b),\pi(c))$ are never both APs;
note that this condition holds for $\pi$ if an only if it holds for $\pi^{-1}$).
It is natural to seek a complete classification of abelian groups~$G$\/
 that have such \mbox{an AP-destroying permutation.}

For $G$\/ infinite, it was shown by Hegarty in~\cite{hegori}
that there exists an AP-destroying permutation of $G$ if and only
if $G/\Omega_2(G)$ has the same cardinality as $G$, where $\Omega_2(G)$
denotes the subgroup of all elements in $G$ whose order divides~$2$.
On the other hand, in the case when $G$ is finite, such a classification
has not yet been obtained. Hegarty conjectured in~\cite{hegori}
that there exists an AP-destroying permutation of $\ZZ/n\ZZ$ for
all $n \neq 2,3,5,7$ (in these four cases, one readily checks
that there is no AP-destroying permutation of $\ZZ/n\ZZ$).
It was shown by Hegarty and Martinsson in~\cite{hegand} that there exists
an AP-destroying permutation of $\ZZ/n\ZZ$ for all
$n \geq n_0 = (9 \cdot 11 \cdot 16 \cdot 17 \cdot 19 \cdot 23)^2
\approx 1.4 \times 10^{14}$.
Moreover, the following lemma, proved by Hegarty in~\cite{hegori},
shows that one can find AP-destroying permutations of larger groups
given AP-destroying permutations of smaller groups:
\begin{lemma}[Hegarty]\label{lem:quot}
Let $G$ be an abelian group and $H \subset G$ a subgroup. If there
exists an AP-destroying permutation of $H$ and an AP-destroying
permutation of $G/H$, then there exists an AP-destroying permutation
of $G$.
\end{lemma}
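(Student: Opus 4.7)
The plan is to build the desired permutation of $G$ by combining $\pi_H$ and $\pi_{G/H}$ in a fibered fashion. Concretely, I would fix a set $T \subset G$ of coset representatives for $H$, giving a bijection $r \colon G/H \to T$, and then define
\[
\pi(r(x) + h) \defeq r(\pi_{G/H}(x)) + \pi_H(h) \qquad \text{for all } x \in G/H,\ h \in H.
\]
Since $r$, $\pi_{G/H}$, and $\pi_H$ are bijections, $\pi$ is manifestly a permutation of $G$. The verification that $\pi$ destroys APs would proceed by case analysis on whether the common difference $d \defeq b - a = c - b$ of a putative AP $(a,b,c)$ in $G$ lies in $H$.

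In the case $d \notin H$, I would reduce modulo $H$ to obtain a nontrivial AP $(a + H, b + H, c + H)$ in $G/H$ with common difference $d + H \neq 0$. By construction the coset of $\pi(g)$ in $G/H$ is $\pi_{G/H}(g + H)$, so if $(\pi(a), \pi(b), \pi(c))$ were also an AP in $G$, then its image in $G/H$ would be an AP with common difference $\pi_{G/H}(b+H) - \pi_{G/H}(a+H)$, contradicting the AP-destroying property of $\pi_{G/H}$.

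In the case $d \in H$, the elements $a, b, c$ all lie in a single coset, so I can write $a = r(x) + h_1$, $b = r(x) + h_2$, $c = r(x) + h_3$ for some $x \in G/H$, where $(h_1, h_2, h_3)$ is a nontrivial AP in $H$ with common difference $d$. Then $\pi(b) - \pi(a) = \pi_H(h_2) - \pi_H(h_1)$ and $\pi(c) - \pi(b) = \pi_H(h_3) - \pi_H(h_2)$; these differences are nonzero (since $\pi_H$ is injective) and cannot be equal, because equality would contradict the AP-destroying property of $\pi_H$.

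I do not anticipate any substantive obstacle: the argument reduces to choosing the right definition of $\pi$ and then reading off the AP-destroying condition in each of the two cases. If there is a ``hard part,'' it is simply the observation that the common difference of an AP in $G$ naturally dictates whether the obstruction is supplied by $\pi_H$ or by $\pi_{G/H}$, which is what makes the fibered construction succeed.
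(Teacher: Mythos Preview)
Your argument is correct and is the standard construction: lift $\pi_{G/H}$ via a transversal and act by $\pi_H$ within each coset, then split into the cases $d\in H$ and $d\notin H$. Note that the paper does not actually prove this lemma; it merely quotes it from Hegarty~\cite{hegori}, where exactly this fibered construction is used, so there is nothing further to compare.
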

It follows from Lemma~\ref{lem:quot} that the set of all finite
abelian groups $G$ that have AP-destroying permutations is closed
under taking direct sums, and that the set of all positive integers
$n$ for which $\ZZ/n\ZZ$ has an AP-destroying permutation is closed
under multiplication. This last implication motivates a closer
study of the case when $n = p$ is a prime, and in this regard,
it was shown in~\cite{hegand} that there is an AP-destroying permutation
of $\ZZ/p\ZZ$ for all primes $p > 3$ such that $p \equiv 3 \pmod
8$. In this paper, we prove a result that includes Hegarty's conjecture
for $\ZZ/p\ZZ$ where $p > 7$ is any prime. Our main theorem is
stated as follows:
\begin{theorem}\label{thm:main}
Let $p$ be a prime and $k$ be a positive integer. Then there is
an AP-destroying permutation of $(\ZZ/p\ZZ)^k$ if and only if $p$
is odd and $(p,k) \not\in \{(3,1),(5,1), (7,1)\}$.
\end{theorem}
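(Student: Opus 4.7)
I would dispatch the exceptional cases directly. When $p = 2$, any nontrivial triple $(a,b,c)$ with $b-a = c-b \neq 0$ satisfies $2(b-a) = 0$, hence $a = c$; for every permutation $\pi$ we then have $\pi(b) - \pi(a) = -(\pi(c) - \pi(b)) = \pi(c) - \pi(b)$ in characteristic two, so every such AP is preserved. When $(p,k) \in \{(3,1),(5,1),(7,1)\}$, a short enumeration of the at most $7! = 5040$ permutations (or a direct combinatorial argument exploiting the fact that $\FF_p$ itself is an AP for $p \leq 7$) shows that no AP-destroying permutation exists.

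\textbf{Sufficiency via reduction.} Applying Lemma~\ref{lem:quot} to the decomposition $(\ZZ/p\ZZ)^{j+\ell} \supset (\ZZ/p\ZZ)^{j}$ with quotient $(\ZZ/p\ZZ)^{\ell}$, the set of exponents $k \geq 1$ for which $(\ZZ/p\ZZ)^k$ admits an AP-destroying permutation is closed under addition. Since every integer $k \geq 2$ is of the form $2a + 3b$ with $a,b \in \ZZ_{\geq 0}$, it suffices to construct AP-destroying permutations of $\FF_p$ for each prime $p \geq 11$, and of both $(\FF_p)^2$ and $(\FF_p)^3$ for $p \in \{3,5,7\}$.

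\textbf{Main construction.} The substantive task is to handle $\FF_p$ for $p \geq 11$. Phrased via the second difference $\Delta_d \pi(m) \defeq \pi(m+d) + \pi(m-d) - 2\pi(m)$, the requirement is that $\Delta_d \pi(m) \neq 0$ for all $m \in \FF_p$ and all $d \in \FF_p \setminus \{0\}$. A natural starting candidate is a power map $\pi(x) = x^r$ with $r$ a small odd integer coprime to $p-1$: for $r = 3$ (valid when $3 \nmid p-1$), a direct expansion gives $\Delta_d \pi(m) = 6 m d^2$, which vanishes only along the line $m = 0$; analogous calculations for $r = 5$ or $r = p-2$ yield similarly sparse residual bad sets. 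However, every odd power map is odd in the sense $\pi(-x) = -\pi(x)$, forcing $\Delta_d\pi(0) \equiv 0$, so a symmetry-breaking correction is unavoidable. I would attempt either (a) transposing $\pi(0)$ with $\pi(t)$ for a judiciously chosen $t$, or (b) adding a low-degree term such as $\alpha x^{p-2}$ (a multiple of $1/x$), and then verify that the modified second difference is nowhere vanishing by Weil-type character-sum estimates, supplemented by a case analysis on $p$ modulo the first few primes so that a usable exponent $r$ is always available. The chief obstacle is precisely this perturbation argument: one must simultaneously preserve bijectivity and avoid creating any new zero of $\Delta_d \pi$, uniformly for all $p \geq 11$.

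\textbf{Small cases.} For $(\FF_p)^2$ and $(\FF_p)^3$ with $p \in \{3,5,7\}$ the group has at most $343$ elements, so an AP-destroying permutation can either be written down by an explicit algebraic formula using the field structure of $\FF_{p^k}$ (for instance a power map on $\FF_{p^k}^{\times}$ combined with a targeted correction at $0$), or located by a direct finite search; in either case, correctness is certified by checking the condition $\Delta_d\pi(m) \neq 0$ over the finite set of pairs $(m,d)$.
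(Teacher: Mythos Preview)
Your outline follows the same philosophy as the paper --- start from an algebraic permutation that destroys almost all APs, then repair the few survivors by transpositions and certify via character sums --- but the proposal stops exactly where the real work begins. You yourself flag ``the chief obstacle is precisely this perturbation argument,'' and indeed it is: nothing in the proposal actually establishes that a suitable correction exists. Two concrete issues make your sketch harder to complete than you suggest. First, the cube map (and any odd power $x^r$) preserves \emph{every} AP centered at $0$, not just $O(1)$ of them, so a single transposition must simultaneously undo $(p-1)/2$ bad APs while introducing none; this is possible but requires a careful case analysis and a genuine counting argument, none of which you supply. Second, your alternative (b), adding a term $\alpha x^{p-2}$, generally destroys bijectivity, so it is not a legitimate perturbation of a permutation.

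The paper sidesteps both problems by taking as its base map $f(x)=1/x$ with $0\leftrightarrow 1$ swapped --- essentially your $r=p-2$, which you mention only in passing. The point of this choice is Lemma~\ref{lem:almostwreck}: $f$ already destroys every AP except one (for $p=3$) or two (for $p>3$). With so few survivors, the paper can compose with one or two explicit transpositions, enumerate the finitely many new AP conditions each transposition could create, and reduce the existence of a good swap to the positivity of an explicit character sum. That sum is then bounded using Lemma~\ref{thm:weil1} and the Hasse bound (Theorem~\ref{thm:weil2}), yielding an AP-destroying permutation for all $q\ge 1307$; the remaining prime powers are handled by machine. Your reduction via Lemma~\ref{lem:quot} to $\FF_p$ ($p\ge 11$) and $(\FF_p)^2,(\FF_p)^3$ for $p\in\{3,5,7\}$ is valid, but the paper in fact works directly in $\FF_{p^k}$ for all $k$, which is no harder once one has committed to the $1/x$ construction.
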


\begin{remark}
Although one can use Lemma~\ref{lem:quot} to show that there is
an AP-destroying permutation of $(\ZZ/p\ZZ)^k$ for all $k > 3$
given the existence of such a permutation for $(\ZZ/p\ZZ)^k$ where
$k = 1$ or where $k \in \{2,3\}$, our approach yields
such a permutation directly for all elementary $p$-groups
of odd order greater than~$7$.  Using this result, together with
Lemma~\ref{lem:quot} and the result of~\cite{hegand},
we then see that to prove Hegarty's conjecture for finite cyclic groups,
it suffices to find AP-destroying permutations of $\ZZ/n\ZZ$
for all $n$ such that $n \in \{2p,3p,5p,7p : p \text{ prime}\}$
and $n < n_0$.
\end{remark}

The proof of Theorem~\ref{thm:main} occupies the remainder of this paper.
We first construct a permutation $f$\/ that destroys all but $O(1)$ APs
in $(\ZZ/p\ZZ)^k$.  We then show that if $p^k$ is large enough, say
$p^k > n_1$, then a small modification of~$f$\/ destroys all APs.
This $n_1$, unlike the bound $n_0$ of~\cite{hegand}, is small enough
that we can deal with the remaining cases $q^k \leq n_1$
by exhibiting an AP-destroying permutation in each case.
This concludes the proof.

\section{Proof of Theorem~\ref{thm:main}}\label{sec:proof}

Let $p$\/ be any prime, let $k$ be any positive integer,
and let $G$\/ denote the elementary $p$-group $(\ZZ/p\ZZ)^k$. If $\pi$ is a permutation of $G$, then
for any $a,r \in G$ with $r \neq 0$, the permutation $\pi$ destroys the AP
$(a-r, a, a+r)$ if and only if $\pi$ destroys the reversed AP $(a+r, a, a-r)$.
Moreover, if $p=3$, then all six permutations of the AP $(a-r, a, a+r)$
are APs, and if $\pi$ destroys one of them then $\pi$ destroys them all.
Thus, in the remainder of our proof, we will somewhat loosely use the notation
``$(a-r, a, a+r)$'' to refer to both the AP $(a-r, a, a+r)$ and
the reversed AP $(a+r, a, a-r)$, and when $p = 3$, the notation
``$(a-r, a, a+r)$'' will refer to any permutation of this AP.

If $p=2$ then no permutation of $G$\/ destroys any AP,
so we need only consider the case when $p$ is odd.
We identify $(\ZZ/p\ZZ)^k$ with the additive group of
the finite field $\FF_q$ of order $q = p^k$.
We shall construct an AP-destroying permutation of $(\ZZ/p\ZZ)^k$
as a permutation of $\FF_q$, by applying small modifications
to the fixed permutation $f : \FF_q \to \FF_q$ defined by
\begin{equation}
\label{eqn:almostwreck}
f(x) \defeq \begin{cases}
   1 & \text{if $x = 0$}, \\ 0 & \text{if $x = 1$},
   \\
   \frac{1}{x} & \text{else.}
   \end{cases}
\end{equation}
The next lemma shows that
$f$\/ is indeed very close to being an AP-destroying permutation:
\begin{lemma}\label{lem:almostwreck}
The permutation $f$ destroys all APs in $\FF_q$ other than $(-1,0,1)$
when $p = 3$ and $(0,\frac32,3)$, $(\frac13, \frac23, 1)$ when
$p > 3$.
\end{lemma}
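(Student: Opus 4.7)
The plan is to case-split on how the AP $(a-r, a, a+r)$ intersects the set $\{0, 1\}$ on which $f$\/ differs from $x \mapsto 1/x$. The key observation is that if all three terms lie in $\FF_q \setminus \{0, 1\}$, then the condition $f(a-r) + f(a+r) = 2f(a)$ becomes $1/(a-r) + 1/(a+r) = 2/a$. Clearing denominators (valid because none of $a-r, a, a+r$ vanishes) reduces this to $2r^2 = 0$, forcing $r = 0$ since $p$\/ is odd. Hence $f$\/ destroys every AP disjoint from $\{0, 1\}$.

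It remains to treat APs meeting $\{0, 1\}$. Since a nondegenerate AP in odd characteristic has three distinct terms, at most two of them can lie in $\{0, 1\}$. First consider the case of exactly one coincidence, which up to AP reversal falls into three subcases: (i) $a \in \{0, 1\}$; (ii) $a - r = 0$; (iii) $a - r = 1$. In subcase~(i) with $a = 0$ the condition becomes $-1/r + 1/r = 2$, i.e.\ $0 = 2$, a contradiction; with $a = 1$ it becomes $2/(1 - r^2) = 0$, also impossible. In subcase~(ii), writing $a = r$, the condition becomes $1 + 1/(2r) = 2/r$, which yields $r = 3/2$ and the exceptional AP $(0, 3/2, 3)$, genuine precisely when $3/2 \notin \{0, 1, 1/2\}$, i.e., when $p > 3$. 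In subcase~(iii), writing $a = 1 + r$, the condition becomes $1/(1+2r) = 2/(1+r)$, which yields $r = -1/3$ and the exceptional AP $(1/3, 2/3, 1)$, again requiring $p > 3$.

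Finally, the case of two coincidences forces the AP, up to reversal, to be one of $\{-1, 0, 1\}$, $\{0, 1, 2\}$, or $\{0, 1/2, 1\}$. In each instance a direct substitution shows $f(a-r) + f(a+r) = 2f(a)$ holds iff $3 = 0$ in $\FF_q$, i.e., iff $p = 3$. In characteristic~$3$ all three sets collapse to $\{-1, 0, 1\}$, yielding the single exceptional AP listed for $p = 3$; moreover the exceptional APs from subcases~(ii) and~(iii) become vacuous modulo~$3$, since $3/2 = 0$ there and $1/3$ is undefined, consistent with the claim.

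The only real obstacle is bookkeeping: verifying that the case split is exhaustive, handling AP reversal without double-counting, and tracking when each algebraic solution yields a nondegenerate AP on which $f$\/ is defined by the correct formula. Every individual step uses nothing deeper than elementary arithmetic in $\FF_q$.
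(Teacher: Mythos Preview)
Your proof is correct and follows essentially the same approach as the paper: case-split on how the AP meets $\{0,1\}$, dispatch the disjoint case via $2r^2=0$, and then check the finitely many boundary configurations by hand. The only cosmetic difference is that the paper organizes the boundary cases by the position of $0$ or $1$ in the AP (center versus endpoint), whereas you organize them by the number of coincidences with $\{0,1\}$; the resulting computations and exceptional APs are identical.
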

\begin{proof}
Let $(a-r, a, a+r)$ be an AP in $\FF_q$ such that $\{a-r, a, a+r\}
\cap \{0,1\} = \varnothing$. Then $f$ sends the AP $(a-r, a, a+r)$
to $\left(\frac{1}{a-r}, \frac{1}{a}, \frac{1}{a+r}\right)$, which
is an AP when
$$
\frac{2}{a} = \frac{1}{a-r} + \frac{1}{a+r}
\ \Longrightarrow \
2(a^2 - r^2) = 2a^2
\ \Longrightarrow \
2r^2 = 0,
$$
but this cannot hold since $p > 2$.
Thus, all APs disjoint from $\{0,1\}$ are destroyed by~$f$.
The remaining cases are handled as follows:
\begin{enumerate}
\item First, consider APs of the form $(-r, 0, r)$. If $r \neq
\pm 1$, then $f$ sends $(-r,0,r)$ to $(-\frac1r, 1, \frac1r)$,
which is not an AP because $p > 2$.~However, $f$ sends $(-1,0,1)$
to $(-1, 1, 0)$, which is an AP if and only if $p = 3$.
\item Next, consider APs of the form $(0, r, 2r)$. If $\{r, 2r\}
\cap \{1\} = \varnothing$, then $f$ sends $(0,r,2r)$ to $(1, \frac
1r, \frac{1}{2r})$, which is an AP if and only if $\frac2r = 1
+ \frac{1}{2r}$, and this happens if and only if $p > 3$ and $r
= \frac32$. If $r =1$, then $f$ sends $(0,r,2r)$ to $(1, 0, \frac
12)$, which is an AP if and only if $p = 3$. If $2r =1$, then $f$
sends $(0,r,2r)$ to $(1, 2, 0)$, which is again an AP if and only
if $p = 3$.
\item[] \hspace{-.33in}
We may now restrict our attention to APs containing $1$ but not $0$.
\item Consider APs of the form $(1-r, 1, 1 + r)$, where $r \neq
\pm 1$. The permutation $f$ sends this AP to $\left(\frac{1}{1-r},
0, \frac{1}{1+r}\right)$, which is not an AP because $p > 2$.
\item  Finally, consider APs of the form
$(1, 1+r, 1+2r)$ with $r \notin \{-1, -\frac12\}$.
Then $f$ sends $(1,1+r,1+2r)$ to
$\left(0, \frac{1}{1+r}, \frac{1}{1+2r}\right)$,
which is an AP if and only if $\frac{2}{1+r} = \frac{1}{1+2r} + 0$,
and this happens if and only if $p > 3$ and $r = - \frac13$.\qedhere
\end{enumerate}
\end{proof}
\begin{remark}
Because $f$\/ is an involution, it also acts as an involution on the
set of APs that are {\em not}\/ \mbox{destroyed by~$f$.}
\end{remark}
Our strategy is to modify $f$\/ by composing it with a permutation $\tau$
that is a simple transposition if $p=3$ and a product of two transpositions
$\tau_1, \tau_2$ if $p>3$, with each transposition moving exactly one term
in each of the APs not destroyed by~$f$.  The resulting permutation $f'$
then destroys those APs but may restore others.  However, if we choose
$\tau$ at random then the expected number of restored APs is $O(1)$,
so once $q$ is at all large there should be some choices of $\tau$
for which no AP is restored and thus $f'$ destroys all APs.
We will prove this by counting how many $\tau$ or $\tau_i$
move a given AP term and introduce no new APs, and showing that
the count is positive.  Because our $f$\/ is given by an algebraic rule,
each of the needed enumerations reduces to estimating the number of points
on certain algebraic curves over~$\FF_q$.  The estimates suffice
with few enough small exceptions (each with $p>3$)
that we can dispose of each remaining~$q$ computationally.
In most cases we find some $\tau_i$ that works
(even though the estimate was not strong enough to guarantee its existence).
In the remaining cases, $q$ is prime and small enough that an
AP-destroying permutation of $\ZZ/p\ZZ$ was already exhibited
by Hegarty in~\cite{hegori}; we also construct such permutations by
starting from the $\tau_i$ that come closest to destroying all APs
and then composing with further transpositions
until the number of surviving APs drops to zero.

In what follows, we consider the cases of $p = 3$ and $p > 3$ separately,
because we saw in the proof of Lemma~\ref{lem:almostwreck} that
the APs not destroyed by~$f$\/ are different in each case.

\subsection{The Case $p = 3$}\label{sec:3case}

By hypothesis $k > 1$, so $q = 3^k > 3$; hence there exists
$y$ in $\FF_q \setminus \{0, \pm 1\}$.  Consider the permutation $f'$
obtained by switching the images of $-1$ and $y$ under~$f$\/
(i.e.~$f'(-1) = 1/y$, $f'(y) = -1$, and $f'(x) = f(x)$ otherwise).
The AP $(-1,0,1)$ is clearly destroyed by $f'$, because $y \neq -1$.
Thus, by Lemma~\ref{lem:almostwreck}, if an AP is not destroyed by~$f'$,
it must either \textsf{(A)} contain $-1$ but not $y$ or
\textsf{(B)} contain $y$.
We treat the cases \textsf{(A)} and \textsf{(B)} separately as follows.
\begin{enumerate}
\item[{\sf (A)}] If $\{-1+r, -1+2r\} \cap \{0,\pm 1, y\} = \varnothing$,
then $f'$ sends the AP $(-1,-1+r,-1+2r)$ to $\left(\frac{1}{y},\frac{1}{-1+r},\frac{1}{-1+2r}\right)$,
which is an AP when
$$
\frac{2}{-1+r} = \frac{1}{y} + \frac{1}{-1+2r} \ \Longrightarrow \ r^2 = y+1.
$$
Otherwise, if $\{-1+r, -1+2r\} \cap \{0,\pm 1\} \neq \varnothing$, then we observe that $f'$ destroys the AP \mbox{$(-1,-1+r,-1+2r)$} if and only if $f'$ destroys the AP $(-1,0,1)$, but this holds by the construction of $f'$.
(We intersected with $\{0,\pm 1\}$, not $\{0,\pm 1,y\}$,
because $y \in \{-1+r, -1+2r\}$ belongs in case~\textsf{(B)}.)
\item[{\sf (B)}] If $\{y+r,y+2r\} \cap \{0,\pm 1,y\} = \varnothing$,
then $f'$ sends the AP $(y,y+r,y+2r)$ to
$\left(-1,\frac{1}{y+r},\frac{1}{y+2r}\right)$, which is an AP when
$$
\frac{2}{y+r} = -1 + \frac{1}{y+2r} \ \Longrightarrow \ r^2 = y^2 + y.
$$
Also, one readily checks that if
$\{y+r,y+2r\} \cap \{0,\pm 1\} \neq \varnothing$,
the AP $(y,y+r,y+2r)$ is destroyed by $f'$. To verify this claim, note that there are $6$ cases to consider, depending on which of $y+r$ and $y+2r$ belongs to the set $\{0, \pm 1\}$. For the sake of clarity, we shall work out the case where $y+r = 0$; the remaining five cases may be handled analogously. If $y + r = 0$, then because $y \not\in \{0, \pm 1\}$, we have that $f'$ sends the AP $(y, y+r, y+2r) = (-r, 0, r)$ to $(-1, 1, \tfrac{1}{r})$, which is an AP if and only if $\tfrac{1}{r} = 3$, but this is of course impossible modulo $3$. Thus, $f'$ destroys the AP $(y, y+r, y+2r)$ when $y + r = 0$.
\end{enumerate}

Now, let $\chi$ denote the Legendre symbol over $\FF_q$. It follows
from the above case analysis that if $y \not\in \{ 0, \pm 1 \}$ is chosen
so that
$$
\big(1-\chi(y+1)\big) \cdot \big(1-\chi(y(y+1))\big) > 0,
$$
then $f'$ destroys all APs in $\FF_q$.
Such a $y$ exists if and only if the sum
\begin{equation}\label{eq:sum3}
A_q(y) \defeq \sum_{y \in \FF_q \setminus \{0, \pm 1\}} \big(1-\chi(y+1)\big)
\cdot \big(1-\chi(y(y+1))\big)
\end{equation}
is positive.
To compute $A_q(y)$, we use the following well-known elementary formula:
\begin{lemma}\label{thm:weil1}
Let $\FF_q$ be a finite field of odd characteristic,
and let $\chi$ be the Legendre symbol on~$\FF_q$.
If $q$ is odd and $g \in \FF_q[x]$ is a polynomial
of degree at most 2 such that $g \neq c \cdot h^2$ for any $c \in
\FF_q$ and $h \in \FF_q[x]$, then
\begin{equation}\label{thm2}
\sum_{y \in \FF_q} \chi(g(y)) = -\chi(a),
\end{equation}
where $a$ is the coefficient of the degree-$2$ term in $g$.
\end{lemma}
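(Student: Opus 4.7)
The plan is to reduce the identity to a single clean character-sum evaluation by completing the square, and then evaluate that sum by an elementary double count of points on a hyperbola. First I would dispose of the degenerate cases: a constant polynomial is automatically of the form $c \cdot 1^2$ and so is excluded by hypothesis, while if $g$ is linear then $y \mapsto g(y)$ is a bijection of $\FF_q$, giving $\sum_y \chi(g(y)) = \sum_z \chi(z) = 0$, which matches $-\chi(0) = 0$ since the coefficient $a$ of the degree-$2$ term vanishes.

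For the main case, write $g(y) = ay^2 + by + c$ with $a \neq 0$. Completing the square via the substitution $u = y + b/(2a)$ (legitimate because $p$ is odd) and then pulling $\chi(a)$ outside via multiplicativity of $\chi$ reduces the claim to
\begin{equation*}
\sum_{y \in \FF_q} \chi(g(y)) \;=\; \chi(a) \sum_{u \in \FF_q} \chi(u^2 + e), \qquad e \defeq -\frac{b^2 - 4ac}{4a^2}.
\end{equation*}
A routine matching of coefficients shows that in this quadratic case the hypothesis $g \neq c \cdot h^2$ is equivalent to $b^2 - 4ac \neq 0$, i.e., $e \neq 0$. It therefore suffices to prove the identity $\sum_u \chi(u^2 + e) = -1$ for every nonzero $e \in \FF_q$.

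To establish this I would count the $\FF_q$-points on the affine curve $w^2 - u^2 = e$ in two ways. Summing on $u$ first gives $q + \sum_u \chi(u^2 + e)$, since for each fixed $u$ the number of $w \in \FF_q$ with $w^2 = u^2 + e$ equals $1 + \chi(u^2 + e)$. On the other hand, the linear change of variables $s = w - u$, $t = w + u$ is a bijection of $\FF_q^2$ (again using that $p$ is odd), and it converts the equation to $st = e$; since $e \neq 0$, there are exactly $q - 1$ solutions, one for each $s \in \FF_q \setminus \{0\}$. Equating the two counts yields $\sum_u \chi(u^2 + e) = -1$, completing the proof. The entire argument is elementary; the only subtle point is the routine check that the hypothesis $g \neq c \cdot h^2$ excludes precisely the degenerate case $e = 0$, in which the sum would instead equal $(q-1)\chi(a)$ and the claimed identity would fail.
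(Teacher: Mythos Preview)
Your argument is correct in every detail: the disposal of the constant and linear cases, the reduction via completing the square to $\sum_u \chi(u^2+e)$ with $e\neq 0$, and the double count of $\FF_q$-points on $w^2-u^2=e$ via the substitution $(s,t)=(w-u,w+u)$ are all valid and cleanly executed. The identification of the hypothesis $g\neq c\cdot h^2$ with the nonvanishing of the discriminant in the quadratic case is also right.

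There is, however, nothing to compare your approach against: the paper does not prove this lemma. It is introduced as a ``well-known elementary formula'' and simply stated, with the paper moving on immediately to apply it. So you have supplied exactly the kind of standard elementary proof the authors had in mind when they omitted one.
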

Now, taking the sum in~\eqref{eq:sum3} over all of $\FF_q$ and
applying the result of Lemma~\ref{thm:weil1}, \mbox{we find that}
$$
\sum_{y \in \FF_q} \big(1+\chi(y) - \chi(y+1) - \chi(y(y+1))\big) = q+1.
$$
Because
$$
\sum_{y \in \{0, \pm 1\}} \big(1+\chi(y) - \chi(y+1) - \chi(y(y+1))\big) = 3 - \chi(-1) \leq 4,
$$
we conclude that $A_q(y) > 0$ provided $q + 1 > 4$,
which happens once $k \geq 2$.
Thus, there exists $y \in \FF_q \setminus \{0, \pm 1\}$
such that $f'$ destroys all APs in $\FF_q$ for $q = 3^k$ and $k \geq 2$.

\begin{remark}
Alternatively, note that we can handle the case $p = 3$ by simply exhibiting AP-destroying permutations of $\FF_9$ and $\FF_{27}$, for it would then follow by Lemma~\ref{lem:quot} that there is an AP-destroying permutation of $\FF_{3^k}$ for each $k > 1$. Making the identification $\FF_9 \simeq \FF_3[\alpha]/(\alpha^2 + 2\alpha + 2)$, one readily checks that the permutation $f'$ of $\FF_9$ obtained by taking $y = \alpha + 1$ destroys all APs in $\FF_9$. Similarly, making the identification $\FF_{27} \simeq \FF_3[\beta]/(\beta^3 + 2\beta + 1)$, one readily checks that the permutation $f'$ of $\FF_{27}$ obtained by taking $y = \beta^2$ destroys all APs in $\FF_{27}$. Nonetheless, this \emph{ad hoc} argument does not readily generalize to primes $p > 3$, while the proof provided prior to the present remark extends quite naturally to primes $p > 3$, as we demonstrate in Sections~\ref{simsec} and~\ref{musthavelabel}.
\end{remark}

\subsection{Destroying $(0, \frac32 ,3)$ in the Case $p > 3$}\label{simsec}

This case takes more work than the case $p=3$, but the strategy is similar.
We begin by constructing a permutation $f'$ of $\FF_q$
that destroys all but one AP.
Take $y \in \FF_q \setminus \{0, \frac13, \frac 23,1, \frac32,3\}$
(note that this already requires $q>5$), and let $f'$ be the permutation
obtained by switching the images of $3$ and~$y$ under~$f$\/; that is,
$f'(3) = \frac1y$, $f'(y) = \frac13$, and $f'(x) = f(x)$ otherwise.
The AP $(0,\frac32,3)$ is clearly destroyed by $f'$, because $y
\neq 3$. Thus, by Lemma~\ref{lem:almostwreck}, if an AP other than $(\frac 13, \frac 23, 1)$ is not
destroyed by $f'$, it must either \textsf{(A)} contain $3$ but
not $y$ or \textsf{(B)} contain $y$. The cases \textsf{(A)} and
\textsf{(B)} each have two subcases depending on the position of
$3$ or~$y$ in the AP; we study each of these subcases separately as follows.
\begin{enumerate}
\item[{\sf (A)}]
\begin{enumerate}
\item If $\{3+r, 3+2r\} \cap \{0,1,3, y\} = \varnothing$, then
$f'$ sends the AP $(3,3+r,3+2r)$ to
$\left(\frac{1}{y},\frac{1}{3+r},\frac{1}{3+2r}\right)$, which is an AP when
$$
\frac{2}{3+r} = \frac{1}{y} + \frac{1}{3+2r}
\ \Longrightarrow \
2r^2 + (9-3y)r + (9-3y) = 0.
$$
If $r = -3$ and $3 + 2r = -3 \neq y$, then $f'$ does not destroy
the AP $(3,3+r,3+2r) = (3,0,-3)$ when $y = \frac 37$.
One readily checks that $f'$ destroys all other APs of the form
$(3,3+r,3+2r)$ that do not contain $y$ and satisfy $\{3+r, 3+2r\}
\cap \{0,1,3\} \neq \varnothing$. (Note that there are $3$ cases left to consider, making a total of four cases in all, according as $3+r$ or $3+2r$ belongs to $\{0,1\}$.)
\item If $\{3-r, 3+r\} \cap \{0,1,3, y\} = \varnothing$,
then $f'$ sends the AP $(3-r,3,3+r)$ to
$\left(\frac{1}{3-r},\frac{1}{y},\frac{1}{3+r}\right)$,
which is an AP when
$$
\frac{2}{y} = \frac{1}{3-r} + \frac{1}{3+r}
\ \Longrightarrow \ r^2
= 9-3y.
$$
If $r = 3$ and $3 + r = 6 \neq y$, then $f'$ does not destroy the
AP $(3-r,3,3+r) = (0,3,6)$ when $y
= \frac{12}{7}$.
If $r = 2$ and $3+r = 5 \neq y$, then $f'$ does not destroy the
AP $(3-r, 3, 3+r)  = (1,3,5)$ when $y = 10$.
There are no cases that remain to be considered for the AP $(3-r, 3, 3+r)$.
\end{enumerate}
\item[{\sf (B)}]
\begin{enumerate}
\item If $\{y+r,y+2r\} \cap \{0,1,3,y\} = \varnothing$, then $f'$
sends the AP $(y,y+r,y+2r)$ to $\left(\frac{1}{3},\frac{1}{y+r},\frac{1}{y+2r}\right)$,
which is an AP when
$$
\frac{2}{y+r} = \frac{1}{3} + \frac{1}{y+2r}
\ \Longrightarrow\
2r^2 + (3y-9)r + (y^2 - 3y) = 0.
$$
If $y+r = 0$ and $y+2r \not\in \{0,1,3\}$, then $f'$ does not destroy
the AP $(y,y+r,y+2r) = (y,0,-y)$ when
$5y = -3$.
If $y+r = 1$ and $y+2r \not\in \{0,1,3\}$, then $f'$ does not destroy
the AP $(y,y+r,y+2r) = (y,1,2-y)$ when $y = 5$.
If $y+r = 3$ and $y+2r \not\in \{0,1,3\}$, then $f'$ does not destroy
the AP $(y,y+r,y+2r) = (y,3,6-y)$ when $y = 12$.
If $y+2r = 1$ and $y+r \not\in \{0,1,3\}$, then $f'$ does not destroy
the AP $(y,y+r,y+2r) = \left(y,\frac{y+1}{2},1\right)$ when $y
= 11$.
One readily checks that $f'$ destroys all other APs of the form
$(y,y+r,y+2r)$ that satisfy $\{y+r, y+2r\}
\cap \{0,1,3\} \neq \varnothing$. (Note that the remaining cases here are when $y + 2r \in \{0,3\}$ and $y+r \not\in \{0,1,3\}$ and when both $y+r$ and $y+2r$ are in $\{0,1,3\}$.)
\item If $\{y-r,y+r\} \cap \{0,1,3,y\} = \varnothing$, then $f'$
sends the AP $(y-r,y,y+r)$ to $\left(\frac{1}{y-r},\frac{1}{3},\frac{1}{y+r}\right)$,
which is an AP when
$$
\frac{2}{3} = \frac{1}{y-r} + \frac{1}{y+r} \ \Longrightarrow\ r^2 = y^2 - 3y.
$$
If $y-r = 0$ and $y+r \not\in \{0,1,3\}$, then $f'$ does not destroy
the AP $(y-r,y,y+r) = (0,y,2y)$ when $y = - \frac32$.
If $y-r = 1$ and $y+r \not\in \{0,1,3\}$, then $f'$ does not destroy
the AP $(y-r,y,y+r) = (1,y,2y-1)$ when $y = \frac54$.
If $y-r = 3$ and $y+r \not\in \{0,1,3\}$, then $f'$ does not destroy
the AP $(y-r,y,y+r) = (3,y,2y-3)$ when $y= \frac34$. The only remaining
AP of the form $(y-r,y,y+r)$ that satisfies
$\{y-r, y+r\} \cap \{0,1,3\} \neq \varnothing$ is given by $(y-r, y, y+r) = (1,2,3)$, but $f'$ evidently destroys this AP.
\end{enumerate}
\end{enumerate}
Now, taking $\chi$ to be the Legendre symbol over $\FF_q$ as before,
it follows from the above case analysis that if
$$y \not\in S \defeq \left\{-\frac{3}{2},-\frac{3}{5},0, \frac{1}{3},\frac{3}{7},\frac{2}{3},\frac{3}{4},1,\frac{5}{4},\frac{12}{7},\frac{3}{2},3,5,10,11,12\right\}$$
is chosen so that
\begin{equation}\label{ycond}
\big(1-\chi((3-y)(3-9y))\big) \cdot \big(1-\chi(3(3-y))\big) \cdot
\big(1-\chi((3-y)(27-y))\big) \cdot \big(1-\chi(-y(3-y))\big) >
0,
\end{equation}
then $f'$ destroys all APs other than the AP $(\frac13, \frac
23,1)$ in $\FF_q$. Expanding the product on the left-hand side (LHS)
of~\eqref{ycond} under the assumption that $y \neq 3$ (so that
$\chi((3-y)^2) = 1$), we obtain a lengthy expression that we denote
by $B_q(y)$ for the sake of readability:
 \scriptsize
 \begin{align*}
B_q(y) & \defeq \big[1-\chi((3-y)(\tfrac{1}{3}-y)) - \chi(9-3y)
- \chi((3-y)(27-y))-\chi(y(y-3)) +\chi(1-3y) + \label{fateqn}\\
& \hphantom{==}\chi((27-y)(\tfrac{1}{3}-y))  + \chi(y(y-\tfrac{1}{3}))
+ \chi(3(27-y)) +\chi(-3y) + \chi(y(y-27))  - \chi((3-y)(27-y)(1-3y))
 -  \nonumber\\
& \hphantom{==}\chi(y(3-y)(3y-1)) -\chi(y(3-y)(27-y)(y-\tfrac{1}{3}))
- \chi(3y(3-y)(y-27)) + \chi(y(27-y)(3y-1))\big].\nonumber
\end{align*}
\normalsize
Clearly, there exists $y \in \FF_q \setminus S$ satisfying~\eqref{ycond}
if and only if
\begin{equation}\label{opossum}
\sum_{y \in \FF_q \setminus S} B_q(y) > 0.
\end{equation}
To estimate the LHS of~\eqref{opossum}, we first estimate
$\sum_{y \in \FF_q} B_q(y)$, for which must invoke not only
Lemma~\ref{thm:weil1} but also the Hasse bound~(see~\cite{borup} for the original paper and Corollary 1.4 of~\cite{silversurfer} for a more modern reference):
\begin{theorem}\label{thm:weil2}
{\rm [Hasse]}
Let $\FF_q$ be a finite field of odd characteristic,
and let $\chi$ be the Legendre symbol on $\FF_q$.
If $g \in \FF_q[x]$ is a polynomial of degree $3$ or
$4$ such that $g \neq c \cdot h^2$ for any $c \in \FF_q$ and $h
\in \FF_q[x]$, then
\begin{equation}\label{thm1}
\Bigl|\chi(a) + \sum_{y \in \FF_q} \chi(g(y))\Bigr| \leq 2\sqrt{q},
\end{equation}
where $a$ is the coefficient of the degree-$4$ term in $g$.
\end{theorem}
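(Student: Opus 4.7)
The plan is to interpret the character sum $\sum_{y \in \FF_q} \chi(g(y))$ as a point count on the affine curve $C \colon z^2 = g(y)$ over $\FF_q$, compare it with the point count on the smooth projective completion $\widetilde C$, which will turn out to have genus one, and then invoke the Weil bound for curves of genus one.

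The first step is to note that, because $\chi$ is the quadratic character (with the convention $\chi(0) = 0$), the number of $z \in \FF_q$ satisfying $z^2 = g(y)$ equals $1 + \chi(g(y))$ for each $y$, and summing over $y$ yields
\[
\#C(\FF_q) \;=\; q + \sum_{y \in \FF_q} \chi(g(y)).
\]
In the second step, one uses the hypothesis that $g$ is not a scalar multiple of a square (and that $\deg g \in \{3,4\}$) to conclude that $C$ is geometrically integral and that $\widetilde C$ has genus one. I would then carefully account for the points at infinity of $\widetilde C$: a single $\FF_q$-rational point when $\deg g = 3$, and $1 + \chi(a)$ points when $\deg g = 4$ (the two geometric points at infinity being $\FF_q$-rational precisely when the leading coefficient $a$ is a nonzero square). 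Adopting the convention $a = 0$ when $\deg g = 3$, both subcases unify to $1 + \chi(a)$ points at infinity, and hence
\[
\#\widetilde C(\FF_q) \;=\; q + 1 + \chi(a) + \sum_{y \in \FF_q} \chi(g(y)).
\]

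Finally, I would invoke the Weil bound for a smooth projective geometrically integral curve of genus $g'$ over $\FF_q$, which states that $\bigl|\#\widetilde C(\FF_q) - (q+1)\bigr| \leq 2 g' \sqrt q$; specializing to $g' = 1$ and rearranging recovers the stated inequality. The main obstacle is the Weil bound itself: for genus-one curves this is precisely Hasse's theorem, proved originally via the endomorphism algebra of an elliptic curve (which needs a rational base point---automatic when $\deg g = 3$, and arrangeable in general by passing to a quadratic twist). A secondary subtlety is verifying the geometric irreducibility and genus of $\widetilde C$, which follows from the squarefree-up-to-scalar hypothesis on $g$ together with a direct Riemann--Hurwitz computation on the double cover $\widetilde C \to \mathbb{P}^1$. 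For the purposes of the present paper it suffices to quote the Hasse/Weil result, so only the elementary bookkeeping in the first two steps actually needs to be carried out by hand.
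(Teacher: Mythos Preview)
The paper does not supply its own proof of this theorem: it is stated with attribution to Hasse and accompanied only by citations to the original paper and to Silverman's text. So there is nothing in the paper to compare your argument against; your sketch is the standard derivation of the character-sum form of the Hasse bound, and it is essentially correct.

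One small caveat worth noting: the hypothesis $g \neq c \cdot h^2$ does not by itself force $g$ to be squarefree. If, say, $g(y) = (y-r)^2 \ell(y)$ with $\ell$ squarefree of degree $1$ or $2$, then $C$ is still geometrically integral but its smooth projective model has genus~$0$, not~$1$. The stated inequality still holds in that case (one strips off the square factor and reduces to Lemma~\ref{thm:weil1}), but your Riemann--Hurwitz step should acknowledge this degenerate possibility. In the applications that actually occur in the paper, every cubic or quartic $g$ is a product of distinct linear factors, so the issue never arises in context.
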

The constant term $1$ in $B_q(y)$ yields a contribution of $q$ to the sum. Each of the other terms in $B_q(y)$ is of the form $\chi(g(y))$ for some
polynomial $g \in \FF_q[x]$. Applying Lemma~\ref{thm:weil1} to
the terms of the form $\chi(g(y))$ where $g$ has degree at most~$2$,
we obtain a total contribution of $0$ from such terms. Then,
applying Theorem~\ref{thm:weil2} to the remaining terms, which
are of the from $\chi(g(y))$ where $g$ has degree $3$ or $4$,
yields
\begin{equation}\label{est1}
\sum_{y \in \FF_q} B_q(y) \geq q- 10\sqrt{q} - 1.
\end{equation}
Estimating $\sum_{y \in S} B_q(y)$ by using the trivial bound $|\chi(g(y))|
\leq 1$ for each $y \in S$ unless $g(y) = 0$ (which can occur when
$y \in \{0,\frac13,3\}$), we find that
\begin{equation}\label{est2}
\sum_{y \in S} B_q(y) \leq 13 \cdot 16 + 0 + 8 + 8 = 224,
\end{equation}
where the term $13 \cdot 16$ bounds the contributions of $y \in S \setminus \{0,\tfrac 13, 3\}$, the term $0$ is the contribution of $y = 0$, and the terms $8 + 8$ bound the contributions of $y \in \{\tfrac 13 , 3\}$. Combining the estimates~\eqref{est1} and~\eqref{est2}, we deduce
that~\eqref{opossum} holds if $q - 10\sqrt{q} \geq 225$, which happens
when $q \geq 434$. Thus, there exists $y \in \FF_q \setminus S$
such that $f'$ destroys all APs other than the AP $(\frac13,\frac
23,1)$ in $\FF_q$ for $q \geq 434$ a prime power.

\subsection{Destroying $(\frac13, \frac23,1)$ in the Case $p > 3$}\label{musthavelabel}

We now perform an analogous maneuver to construct a permutation
that destroys \emph{all} APs in $\FF_q$ for sufficiently large
$q$. With $q \geq 434$, take $y \in \FF_q \setminus S$ so that
the permutation $f'$ destroys all APs other than the AP
$(\frac 13, \frac23,1)$, and for
$z \in \FF_q \setminus \{0, \frac13, \frac 23,1, \frac 32,3,y\}$,
consider the permutation $f''$ obtained by switching the images of
$\frac13$ and $z$ under $f'$ (i.e.~$f''(\frac13) = \frac1z$,
$f''(z) = 3$, and $f''(x) = f'(x)$ otherwise).
The AP $(\frac13,\frac23,1)$ is clearly destroyed by $f''$, as
$z \neq \frac13$. Thus, by Lemma~\ref{lem:almostwreck}, if an
AP is not destroyed by $f''$, it must either \textsf{(A)} contain
$\frac13$ but not $z$ or \textsf{(B)} contain $z$. The cases \textsf{(A)}
and \textsf{(B)} each have two subcases depending on the position
of $\frac13$ or $z$ in the AP; we study each of these subcases
separately as follows:
\begin{enumerate}
\item[{\sf (A)}]
\begin{enumerate}
\item If $\{\frac13+r, \frac13+2r\} \cap \{0,\frac13,1,3,y,z\}
= \varnothing$, then $f''$ sends the AP $(\frac13,\frac13+r,\frac
13+2r)$ to $\left(\frac{1}{z},\frac{1}{\frac13+r},\frac{1}{\frac
13+2r}\right)$, which is an AP when
$$
\frac{2}{\frac13+r} = \frac{1}{z} + \frac{1}{\frac13+2r}
\ \Longrightarrow\
2r^2 + (1-3z)r + (\tfrac19 - \tfrac z3) = 0.
$$
We must now deal with the cases where the AP does not contain $z$
but $\{\frac13+r, \frac13+2r\} \cap \{0,\frac13,1,3,y\} \neq
\varnothing$; as the computations are more complicated and numerous
in the present situation, we will not be as explicit as we were
in Section~\ref{simsec}.
If $\frac13 + r = 0$ and $\frac13 + 2r = - \frac13 \neq z$,
then there is at most one value of $z$, call it $a_1$, such that
$f''$ does not destroy the AP $(\frac13, 0, -\frac13)$. (The
value $a_1$, if it exists, is uniquely determined by the particular
field $\FF_q$ with which we are working.)
Similarly, from the cases where $\frac13 + r \in \{1,3,y\}$, there
are at most three additional values of $z$, call them $a_2, a_3, a_4$,
such that $f''$ does not destroy the AP $(\frac13, \frac13+r, \frac13+2r)$.
From the cases where $\frac13 + 2r \in \{0,3,y\}$, there are at most
three values of $z$, call them $a_5, a_6, a_7$, such that
$f''$ does not destroy the AP $(\frac13, \frac13+r, \frac13+2r)$.
 The only remaining
case to consider is when $\tfrac 13 + 2r = 1$, but the corresponding AP $(\tfrac 13, \tfrac 13 + r, \tfrac 13 + 2r) = (\tfrac 13, \tfrac 23, 1)$ is, as mentioned before, destroyed by $f'$.
\item If $\{\frac13 -r, \frac13 +r\} \cap \{0,\frac13,1,3,y,z\} = \varnothing$,
then $f''$ sends the AP $(\frac 13 -r,\frac 13,\frac 13+r)$ to
$\left(\frac{1}{\frac 13 -r},\frac{1}{z},\frac{1}{\frac 13 +r}\right)$,
which is an AP when
$$
\frac2z = \frac{1}{\frac13 - r} + \frac{1}{\frac 13 +r} \ \Longrightarrow\
r^2 = \frac19 - \frac{z}{3}.
$$
From the cases where $\frac 13 - r \in \{0,1,3,y\}$, there are
at most four values of $z$, call them $a_8, a_9, a_{10}, a_{11}$,
such that $f''$ does not destroy the AP $(\frac 13 -r,\frac 13,\frac13+r)$. By symmetry, we have also taken care of the cases where
$\frac 13 + r \in \{0,1,3,y\}$. There are no cases that remain to be considered for the AP $(\frac 13 -r,\frac 13,\frac 13+r)$.
\end{enumerate}
\item[{\sf (B)}]
\begin{enumerate}
\item If $\{z+r,z+2r\} \cap \{0,\frac 13,1,3,y,z\} = \varnothing$,
then the AP $(z,z+r,z+2r)$ is sent under $f''$ to $\left(3,\frac{1}{z+r},\frac{1}{z+2r}\right)$,
which is an AP when
$$
\frac{2}{z+r} = 3 + \frac{1}{z+2r} \ \Longrightarrow\
6r^2 + (9z-3)r + (3z^2 - z) = 0.
$$
The cases where $\{z+r,z+2r\} \cap \{0,\frac 13,1,3,y\} \neq \varnothing$
give at most $29$ values of $z$, call them $a_{12}, \dots, a_{40}$,
such that $f''$ does not destroy the AP $(z,z+r,z+2r)$. To see why, observe that there are at most five values of $z$ arising from the possibility that $z+r \in \{0, \frac 13, 1, 3, y\} \not\ni z+ 2r$, because for each value of $z+r \neq \frac 13$ we obtain a linear equation in $z$, and when $z + r = \frac 13$, we obtain a quadratic equation one of whose solutions is $z = \frac 13$ and must therefore be discarded. Similarly, we obtain at most five values of $z$ arising from the possibility that $z+2r \in \{0, \frac 13, 1, 3, y\} \not\ni z+ r$. In the cases where $z+r, z+2r \in \{0, \frac 13, 1, 3, y\}$, we can solve for $z$ immediately (without even imposing the condition that $f''$ destroys the AP $(z, z+r, z+2r)$); we obtain at most $20$ values of $z$, one corresponding to each of the $5 \cdot 4 = 20$ different ordered pairs of distinct elements of $\{0, \frac 13, 1, 3, y\}$. Nevertheless, it is clear that we should discard the case where $z+r= \frac 13$ and $z+2r = 0$, which would imply that $z = \frac 23$, contradicting our restriction on the value of $z$. We therefore end up with at most $5 + 5 + 20 - 1 = 29$ values of $z$. There are no cases that remain to be considered for the AP $(z,z+r,z+2r)$.
\item If $\{z-r,z+r\} \cap \{0,\frac 13,1,3,y,z\} = \varnothing$,
then the AP $(z-r,z,z+r)$ is sent under $f''$ to $\left(\frac{1}{z-r},3,\frac{1}{z+r}\right)$,
which is an AP when
$$
6 = \frac{1}{z-r} + \frac{1}{z+r} \ \Longrightarrow\ r^2 = z^2 - \frac{z}{3}.
$$
The cases where $\{z-r,z+r\} \cap \{0,\frac 13,1,3,y\} \neq \varnothing$
give at most $14$ values of $z$, call them $a_{41}, \dots, a_{54}$,
such that $f''$ does not destroy the AP $(z-r,z,z+r)$. To see why, observe that there are at most five values of $z$ arising from the possibility that $z-r \in \{0, \frac 13, 1, 3, y\} \not\ni z+ r$, because for each value of $z-r \neq \frac 13$ we obtain a linear equation in $z$, and when $z - r = \frac 13$, we obtain a quadratic equation one of whose solutions is $z = \frac 13$ and must therefore be discarded. By symmetry, we have also taken care of the possibility that $z+r \in \{0, \frac 13, 1, 3, y\} \not\ni z- r$. In the cases where $z-r, z+r \in \{0, \frac 13, 1, 3, y\}$, we can solve for $z$ immediately (without even imposing the condition that $f''$ destroys the AP $(z-r, z, z+r)$); we obtain at most $10$ values of $z$, one corresponding to each of the ${5 \choose 2} = 10$ different pairs of distinct elements of $\{0, \frac 13, 1, 3, y\}$. Nevertheless, it is clear that we should discard the case where $z-r= 0$ and $z+r = 3$, which would imply that $z = \frac 32$, contradicting our restriction on the value of $z$. We therefore end up with at most $5 + 10 - 1 = 14$ values of $z$. There are no cases that remain to be considered for the AP $(z-r, z, z+r)$.
\end{enumerate}
\end{enumerate}
As in Section~\ref{simsec}, we can use the above case analysis to write down a condition on when $f''$ destroys all APs in $\FF_q$. Indeed, if
$$
z \not\in S' \defeq
   \left\{a_i : 1 \leq i \leq 54\right\}
 \cup
  \left\{0, \frac 13, \frac 23,1, \frac 32,3,y\right\}
$$
is chosen so that we have
\begin{equation}\label{neweqs}
\big(1-\chi((1-27z)(1-3z))\big) \cdot \big(1-\chi(1-3z)\big) \cdot
\big(1-\chi((1-3z)(1 - \tfrac z3 ))\big) \cdot \big(1-\chi(-3z(1-3z))\big)
> 0,
\end{equation}
then $f''$ destroys all APs in $\FF_q$. Let $w \defeq \tfrac{1}{z}$ for $z \neq 0$, and define $S'' \defeq \{\tfrac{1}{x} : x \in S' \setminus \{0\}\} \cup \{0\}$. Then, rewriting the above condition in terms of $w$ and $S''$, we obtain the following ``new condition'': if $w \not\in S''$ is chosen so that
\small
\begin{equation}\label{wcond}
\big(1-\chi((3-w)(3-9w))\big) \cdot \big(1-\chi(3(3-w))\big) \cdot
\big(1-\chi((3-w)(27-w))\big) \cdot \big(1-\chi(-w(3-w))\big) >
0,
\end{equation}
\normalsize
then $f''$ destroys all APs in $\FF_q$. But upon making the replacements $y \rightsquigarrow w$ and $S \rightsquigarrow S''$, one readily observes that this ``new condition'' is the same as the analogous condition obtained in Section~\ref{simsec}, namely~\eqref{ycond}.
Therefore, there exists $w \in \FF_q \setminus S''$ satisfying~\eqref{neweqs}
if and only if
\begin{equation}\label{opossum2}
\sum_{w \in \FF_q \setminus S''} B_q(w) > 0.
\end{equation}
To estimate the LHS of~\eqref{opossum2}, we first recall the bound~\eqref{est1}:
\begin{equation}\label{est12}
\sum_{w \in \FF_q} B_q(w) \geq q - 10\sqrt{q}-1.
\end{equation}
Next, estimating $\sum_{w \in S''} B_q(w)$ by using the trivial bound
$|\chi(g(w))| \leq 1$ for each $w \in S''$ unless $g(w) = 0$ (which
can occur when $w \in \{0,\frac 13,3\}$), we find that
\begin{equation}\label{est22}
\sum_{w \in S''} B_q(w) \leq 58 \cdot 16 + 0 + 8 + 8 = 944,
\end{equation}
where the term $58 \cdot 16$ bounds the contributions of $w \in S'' \setminus \{0,\tfrac 13, 3\}$, the term $0$ is the contribution of $w = 0$, and the terms $8 + 8$ bound the contributions of $w \in \{\tfrac 13 , 3\}$. Combining the estimates~\eqref{est12} and~\eqref{est22}, we deduce
that~\eqref{opossum2} holds if $q - 10\sqrt{q} \geq 945$, which
happens when $q \geq 1307$. Thus, there exists $w \in \FF_q \setminus
S''$ such that $f''$ destroys all APs in $\FF_q$ for $q \geq 1307$
a prime power.

\subsection{Remaining Cases}

We have now shown that there exists an AP-destroying permutation
of $(\ZZ/p\ZZ)^k$ if $p = 3$ and $k \geq 2$ and if $p^k \geq 1307$.
To complete the proof of Theorem~\ref{thm:main}, it remains to
check the finitely many remaining cases, and we do this by resorting
to a computer program.  By Lemma~\ref{lem:quot}, it
suffices to check that $(\ZZ/p\ZZ)^k$ has an AP-destroying permutation
for the following cases:
$(p,k) \in\{(p,1) : 7 < p < 1307 \text{ is prime}\} \cup \{(5,2),(5,3),(7,2),(7,3)\}$.
In each of these cases other than $p = q = 11, 13, 29, 31$, our code relies
on the construction used in the argument of Section~\ref{sec:proof}.
Indeed, for these cases, the idea of modifying the values of $f(3)$
and $f(\frac 13)$ actually works to yield an AP-destroying permutation
of $(\ZZ/p\ZZ)^k$.
For $q \in \{11, 13, 29, 31\}$, explicit AP-destroying permutations of
$\ZZ/p\ZZ$ were constructed by Hegarty in~\cite{hegori}. The code
required to check these cases, as well as a database listing the explicit AP-destroying permutations for all of the above exceptional cases, may be obtained by downloading the source files from the following website: \url{https://arxiv.org/format/1601.07541v3}.

\section*{Acknowledgments}

\noindent The first author is supported by NSF grants DMS-1100511 and DMS-1502161.
The second author acknowledges Joseph Gallian for
suggesting this problem during the 2015 Duluth Mathematics REU,
and was supported by NSF grant DMS-1358695 and NSA grant H98230-13-1-0273.
He also thanks Amol Aggarwal, Levent Alpoge, Peter Hegarty, Mitchell Lee,
and Julian Sahasrabudhe for useful conversations and suggestions. Both authors thank
Aleksey Fedotov for helpful comments, as well as the anonymous referee, who made many insightful comments, suggestions, and corrections \mbox{regarding various important aspects of the article.}

\noindent

\bibliographystyle{amsxport}
\bibliography{bibfile}

{\bigskip\medskip{\footnotesize%
  \textsc{Noam D. Elkies. Department of Mathematics, Harvard University, \mbox{Cambridge, MA 02138}} \par
  \textit{E-mail address}: \url{elkies@math.harvard.edu} \par
  \addvspace{\medskipamount}
  \textsc{Ashvin A. Swaminathan. Department of Mathematics, Harvard University, \mbox{Cambridge, MA 02138}} \par
\textit{E-mail address}: \url{aaswaminathan@college.harvard.edu}
}}

\end{document}